\font\ssc=pplrc9d at 11 truept
\newcommand\qedbox{$\rlap{$\sqcap$}\sqcup$}
\let\ceheadL\cehead
\renewcommand\cehead[1]{
\ceheadL{\textnormal{#1}}
}
\definecolor{Maroon}{cmyk}{0, 0.87, 0.68, 0.32}
\definecolor{RoyalBlue2}{cmyk}{80,100,0,0.1}
\newcommand\auths[1]{\large \textsc{\textcolor{Maroon}{#1}}\setstretch{1.2}}
\newcommand\titl[1]{\center \linespread{1.1}\color{RoyalBlue2}\Large\textbf{ #1}\color{black}\bigskip} 
\renewcommand\abstract[1]{
\begin{center}
{\textbf{Abstract}
}
\end{center}
{
\linespread{1.1}\fontsize{9pt}{-10pt}\selectfont #1}}
\DeclareSymbolFont{operators}{\encodingdefault}{ppl}{m}{n}
\DeclareMathAlphabet{\mathbf}{\encodingdefault}{ppl}{bx}{n}
\DeclareMathAlphabet{\mathit}{\encodingdefault}{ppl}{m}{it}
\renewcommand{\thesection}{\arabic{section}}
\titleformat{\section}{\medskip\bigskip\normalfont\Large\bf}{\thesection}{0.5em}{}
\titleformat{\subsection}{\smallskip\bigskip\normalfont\large\bf}{\thesubsection}{0.5em}{}
\newtheoremstyle{dotless}{}{}{\itshape}{}{\bfseries}{}{1em}{}
\newtheorem*{theo*}{Theorem}
\newtheorem*{rem*}{Remark}
\theoremstyle{dotless}
\newtheorem{theo}{Theorem}
\newtheorem{lem}[theo]{Lemma}
\newtheorem{cor}[theo]{Corollary}
\newtheorem{ex}[theo]{Example}
\newtheorem{quest}[theo]{Question}
\renewenvironment{proof}{\smallbreak\noindent {\sc Proof \;---\;}}{\hfill\qedbox}
\numberwithin{theo}{section}
\DeclareOldFontCommand{\rm}{\normalfont\rmfamily}{\mathrm}
\DeclareOldFontCommand{\sf}{\normalfont\sffamily}{\mathsf}
\DeclareOldFontCommand{\tt}{\normalfont\ttfamily}{\mathtt}
\DeclareOldFontCommand{\bf}{\normalfont\bfseries}{\mathbf}
\DeclareOldFontCommand{\it}{\normalfont\itshape}{\mathit}
\DeclareOldFontCommand{\sl}{\normalfont\slshape}{\@nomath\sl}
\DeclareOldFontCommand{\sc}{\normalfont\scshape}{\@nomath\sc}
\DeclareSymbolFont{newfont}{OML}{cmm}{m}{it}%
\DeclareMathSymbol{\Varrho}{3}{newfont}{37}
\begin{document}

\titl{A note on right-nil and strong-nil skew braces\footnote{All authors are members of the non-profit association ``Advances in Group Theory and Applications'' (www.advgrouptheory.com). The second and fourth authors are supported by GNSAGA (INdAM). The second author has been supported by a research visiting grant issued by the Istituto Nazionale di Alta Matematica (INdAM). The fourth author is funded by the European Union - Next Generation EU, Missione 4 Componente 1 CUP B53D23009410006, PRIN 2022- 2022PSTWLB - Group Theory and Applications.}}

\auths{A. Ballester-Bolinches -- M. Ferrara -- V. P\'erez-Calabuig -- M. Trombetti}

\thispagestyle{empty}
\justify\noindent
\setstretch{0.3}
\abstract{Nilpotency concepts for skew braces are among the main tools with which we are nowadays classifying certain special solutions of the Yang--Baxter equation, a consistency equation that plays a relevant role in quantum statistical mechanics and in many areas of mathematics. In this context, two relevant questions have been raised in \cite{Cedo} (see Questions 2.34 and 2.35) concerning right- and central nilpotency. The aim of this short note is to give a negative answer to both questions: thus, we show that a finite strong-nil  brace $B$ need not be right-nilpotent. On a positive note, we show that there is one (and only one, by our examples) special case of the previous questions that actually holds. In fact, we show that if~$B$ is a skew brace of nilpotent type and $b\ast b=0$ for all $b\in B$, then $B$ is centrally nilpotent.}

\setstretch{2.1}
\noindent
{\fontsize{10pt}{-10pt}\selectfont {\it Mathematics Subject Classification \textnormal(2020\textnormal)}: 16T25, 20F18, 16N99}\\[-0.8cm]

\noindent 
\fontsize{10pt}{-10pt}\selectfont  {\it Keywords}: skew brace, right-nil skew brace, right nilpotent skew brace, strong-nil, centrally nilpotent skew brace\\[-0.8cm]

\setstretch{1.1}
\fontsize{11pt}{12pt}\selectfont

\bigskip\bigskip\bigskip

\section{Introduction}
The Yang-Baxter equation (YBE) is a consistency equation that was independently set by Yang \cite{yang} and Baxter \cite{baxter} in the field of quantum statistical mechanics. The study of its solutions not only has many relevant interpretations in the realm of mathematical physics but it also plays a key role in the foundation of quantum groups and furnishes a multidisciplinary approach from a wide variety of areas such as Hopf algebras,  knot theory and braid theory among others (see \cite{drinfeld90},\cite{Faddev},\cite{Gateva-Ivanova18-advmath}).

In the inspiring paper of Drinfel'd \cite{drinfeld}, the attention is drawn to the so-called set-theoretical solutions of the YBE, a family of solutions that has been the object of a very prolific research since then. A (finite) \emph{set-theoretic solution} of the YBE is a pair~\hbox{$(X,r)$,} where~$X$ is a (finite) set and $r\colon X \times X \rightarrow X \times X$ is a bijective map satisfying the equality $r_{12}r_{23}r_{12} = r_{23}r_{12}r_{23}$, where $r_{12} = r\times \operatorname{id}_X$ and $r_{23} = \operatorname{id}_X \times\, r$.  The problem of classifying all set-theoretical solutions has brought to light the definitions of new algebraic structures so that properties of solutions can be translated in terms of such structures and vice-versa. In this light, left skew braces play a key role in the classification problem of {\it non-degenerate} solutions, i.e. set-theoretic solutions for which both components are bijective (see~\cite{GuarnieriVendramin17} for example).
 
A \emph{\textnormal(left\textnormal) skew brace} $B$ is a set endowed with two group structures, $(B,+)$ and $(B,\cdot)$, satisfying the following {\it skew} distributivity property $$a \cdot (b+c) = a\cdot b - a + a\cdot c\quad \forall\, a,b,c\in B.$$ If $(B,+)$ satisfies some property $\mathfrak{X}$ (such as abelianity), we say that $B$ is a skew brace of~{\it $\mathfrak{X}$ type}; in particular, skew braces of abelian type are just  Rump's \emph{\textnormal(left\textnormal) braces} (see~\cite{GuarnieriVendramin17} and~\cite{Rump07}); note that both operations in $B$ can be related by the so-called {\it star product}: $a\ast b = -a + a\cdot b -b$, for all $a,b\in B$. A non-degenerate set-theoretic solution of the~YBE naturally leads to a  skew brace structure over the group (see \cite{Rump07})
\[G(X,r) = \langle x \in X\,|\, xy = uv,\, \text{if $r(x,y) = (u,v)$}\rangle,\]  which is usually called the \emph{structure}  skew brace of~$(X,r)$. Conversely, every skew brace~$B$ defines a solution $(B,r_B)$ of the YBE (see~\cite{GuarnieriVendramin17}).





Although it is very difficult to understand arbitrary non-degenerate set-theoretic solutions of the~YBE, it turns out that nilpotency concepts for skew braces allows us to define certain particularly tamed classes of solutions. For example, right-nilpotent skew braces have been introduced to study the so-called {\it multipermutation} solutions, that is, non-degenerate set-theoretic solutions that can be retracted into the trivial solutions over a singleton after finitely many identification steps (see \cite{Cedo,cameron,periodici}); and it turns out that a non-degenerate set-theoretic solution is multipermutation if the structure skew brace associated with this solution is right-nilpotent and of nilpotent type (see \cite{Cedo}, Lemma 2.16, and \cite{55}, Theorem 4.13). Thus, understanding if right-nilpotency is equivalent to some weaker concept could have a breakthrough effect on classifying the multipermutation solutions of the YBE, since it would mean we can recognize right-nilpotency with less computational effort. This is what led the authors of~\cite{55} to ask about the equivalence of right-nilpotency with the seemingly weaker concept of right-nil (see next section for the precise definitions).

\begin{quest}[see \cite{Cedo}, Question 2.34]\label{quest1}
$\,$
\begin{itemize}
    \item Let $B$ be a finite right-nil skew brace. Is $B$ right-nilpotent?
\end{itemize}
\end{quest}

\medskip

Another nilpotency concept that plays a major role in the theory of skew braces and the solutions of the YBE is that of central nilpotency. Central nilpotency is actually the strongest nilpotency concept for skew braces and was introduced by using a skew brace-theoretical analog of the centre of a group (see~\cite{Bonatto} and~\cite{periodici}), so it can be regarded as the true analog of the usual nilpotency for groups. As for right-nilpotency, it is relevant to understand if central nilpotency can be derived from the seemingly weaker concept of strong-nil.

\begin{quest}[see \cite{Cedo}, Question 2.35]\label{quest2}
$\,$
\begin{itemize}
    \item Let $B$ be a finite strong-nil skew brace. Is $B$ centrally nilpotent?
\end{itemize}
\end{quest}

Our answers to both Questions \ref{quest1} and \ref{quest2} is negative and in fact we provide an example of a finite brace $B$ of order $32$ that is strong-nil but not right-nilpotent (see Example \ref{exB}); keep in mind that strong-nil (resp. central nilpotency) implies right-nil (resp. right-nilpotency). In this example, it happens that $(b\ast b)\ast b=0=b\ast(b\ast b)$ for every $b\in B$ but $\operatorname{Soc}(B)=\{0\}$. Thus, the only special circumstance in which the above questions could be given a positive answer is that in which $b\ast b=0$ for all $b\in B$. This is in fact happening, as shown by our main result.

\medskip

\noindent{\bf Main Theorem}\quad {\it Let $B$ be a finite skew brace of nilpotent type such that $b\ast b=0$ for all~\hbox{$b\in B$.} Then $B$ is centrally nilpotent.}

\medskip

This result can also be considered as a partial extension of Smoktuno\-wicz~\cite{Smok},~The\-o\-rem~12. We further show that in our main theorem, the nilpotency of the additive group cannot be replaced by weaker concepts, such as that of supersoluble group (see~Example~\ref{exA}).

\section{Preliminaries and results}



Let $(B,+,\cdot)$ be a skew brace. The common identity element of both group operations is denoted by $0$, and the product of two elements will be denoted by juxtaposition; as usual, group addition follows group product in the order of operations. A \emph{skew sub-brace} of a skew brace is a subgroup of the additive group which is also a subgroup of the multiplicative group. 

As we already noted in the introduction, both operations in $B$ can be related by the so-called star product ($a\ast b = -a + a\cdot b -b$, for all $a,b\in B$), which always comes first in the order of operations. Indeed, both group operations coincide if and only if $a\ast b=0$ for all $a,b\in B$; in this case, $B$ is said to be a \emph{trivial skew brace}. The following properties are essential to our work:$$
\begin{array}{c}
(ab) \ast c  =  a \ast (b\ast c) + b\ast c + a \ast c,\\[0.2cm]
ab  =  a + a \ast b + b,\\[0.2cm]
a \ast (b+c)  = a \ast b + b + a \ast c - b,
\end{array}$$ for all $a,b,c\in B$. If $X$ and $Y$ are subsets of $B$, then $X \ast Y$ is the subgroup of~$(B,+)$ generated by the elements of the form $x \ast y$, for all $x \in X$ and $y \in Y$.

For every $a\in B$, the map 
$\lambda_a \colon B \rightarrow B,$ given by 
$\lambda_a(b) = -a + ab$, is an automorphism of $(B,+)$ and the map $\lambda\colon (B,\cdot) \rightarrow \operatorname{Aut}(B,+)$ which maps $a$ to $\lambda_a$ is a group homomorphism. For every $a,b\in B$,
\[ a \ast b := \lambda_a(b) - b = - a + ab - b.\] \emph{Left-ideals} are $\lambda$-invariant skew sub-braces, or equivalently skew sub-braces $L$ such that $B \ast L \subseteq L$. A left-ideal $S$ is said to be a \emph{strong left-ideal} if $(S,+)$ is a normal subgroup of $(B,+)$, and an \emph{ideal} if $(S,\cdot)$ is also a normal subgroup of $(B,\cdot)$, or equivalently $S \ast B \subseteq S$. Ideals of skew braces allow us to take quotients in a skew brace: if $I$ is an ideal of $B$, then $B/I = \{bI = b+I: b \in B\}$ denotes the quotient of $B$ over~$I$. It should also be remarked that, for each skew sub-brace~$S$ and each strong left ideal $I$ of $B$, we have $SI = S+I$. Furthermore, for the sake of simplicity, we introduce the following notations (here,~$E$ is a subset of $B$):
\begin{itemize}
    \item To denote that $C$ is a skew sub-brace of $B$, we write $C\leq B$. To denote that~$I$ is an ideal of $B$, we write $I\trianglelefteq B$.
\end{itemize}

If $B$ is any skew brace, then the following subsets are always ideals of $B$:\[
B\ast B,\quad\operatorname{Soc}(B)  = \operatorname{Ker}(\lambda)\cap Z(B,+)\quad\textnormal{and}\quad\zeta(B)=\operatorname{Soc}(B)\cap Z(B,\cdot),\] where $Z(B,+)$ and $Z(B,\cdot)$ are the centers of $(B,+)$ and $(B,\cdot)$, respectively.

\medskip

Now, we introduce the nilpotency concepts we deal with. Let $B$ be a skew brace. Set $R_0(B)=B=L_0(B)$ and recursively define $$R_{n+1}(B)=R_{n}(B)\ast B\quad\textnormal{and}\quad L_{n+1}(B)=B\ast L_n(B)$$ for all $n\in\mathbb{N}$. Then $B$ is {\it right-nilpotent} (resp.~{\it left-nilpotent}) if $R_m(B)=\{0\}$ (resp. $L_m(B)=\{0\}$) for some $m\in\mathbb{N}$; the smallest such an $m$ is the {\it $r$-class} (resp. {\it $l$-class}) of~$B$. Thus, if $B$ is right-nilpotent of $r$-class $m$, then \[
\begin{array}{c}\label{star}
\underbrace{(\ldots((b\ast b)\ast \ldots )\ast b)}_{m\;\textnormal{times}}=0\tag{$\star$}
\end{array}
\] for all $b\in B$; similarly, if $B$ is left-nilpotent of $l$-class $m$, then \[
\begin{array}{c}\label{circle}
\underbrace{(b\ast(\ldots \ast(b\ast b))\ldots )}_{m\;\textnormal{times}}=0\tag{$\bullet$}
\end{array}
\] for all $b\in B$. Seemingly weaker concepts can be introduced if we only require \eqref{star} or~\eqref{circle} to hold (see \cite{Cedo} and \cite{Smok}). Thus a skew brace is {\it right-nil} (resp. {\it left-nil}) if, for all $b\in B$, there is a suitable $m=m_b\in\mathbb{N}$ such that equation \eqref{star} (resp. \eqref{circle}) holds.

In order to deal with right-nilpotency, we need the following chain of ideals of a skew brace $B$. Let $\operatorname{Soc}_0(B)=\{0\}$ and recursively define $\operatorname{Soc}_{n+1}(B)$ to satisfy the equality $\operatorname{Soc}_{n+1}(B)/\operatorname{Soc}_n(B)=\operatorname{Soc}\big(B/\operatorname{Soc}_n(B)\big)$. It has been proved in \cite{Cedo}, Lemma 2.16, that a skew brace $B$ of nilpotent type is right-nilpotent if and only if $B=\operatorname{Soc}_n(B)$ for some $n\in\mathbb{N}$.

A stronger concept of nilpotency is given by central nilpotency. Let $\zeta_0(B)=\{0\}$ and recursively define $\zeta_{n+1}(B)$ to satisfy $\zeta_{n+1}(B)/\zeta_n(B)=\zeta\big(B/\zeta_n(B)\big)$. A skew brace is {\it centrally nilpotent} if there is some $m\in\mathbb{N}$ for which $B=\zeta_m(B)$. We refer to \cite{tutti23} for further information about centrally nilpotent skew braces. Here, we only observe that centrally nilpotent implies both left- and right-nilpotency, and that, conversely, left- and right-nilpotency imply central nilpotency if the skew brace is of nilpotent type (see~Co\-rol\-la\-ry 2.15 of \cite{periodici}).

\begin{lem}\label{abelian}
Let $B$ be a finite skew brace of nilpotent type such that $(B,\cdot)$ is nilpotent and $b\ast b=0$ for all $b\in B$. Then~$B$ is centrally nilpotent.
\end{lem}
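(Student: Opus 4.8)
The plan is to argue by induction on $|B|$, reducing everything to the production of a single nonzero central ideal. First I would record that all three hypotheses pass to quotients: if $I\trianglelefteq B$, then $B/I$ is again finite, of nilpotent type, has $(B/I,\cdot)$ nilpotent (a quotient of a nilpotent group), and satisfies $\bar b\ast\bar b=\overline{b\ast b}=0$. Consequently it suffices to show that $\zeta(B)\neq\{0\}$ whenever $B\neq\{0\}$: for then $B/\zeta(B)$ is strictly smaller and centrally nilpotent by induction, and pulling the upper $\zeta$-series back through the canonical projection yields $B=\zeta_m(B)$ for some $m$, which is exactly central nilpotency.

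Now $\zeta(B)=\operatorname{Soc}(B)\cap Z(B,\cdot)$, and here the problem splits cleanly. The multiplicative hypothesis handles the easy half: $(\operatorname{Soc}(B),\cdot)$ is a normal subgroup of the finite nilpotent group $(B,\cdot)$, so as soon as $\operatorname{Soc}(B)\neq\{0\}$ it must meet $Z(B,\cdot)$ nontrivially, whence $\zeta(B)\neq\{0\}$. Thus the entire statement reduces to the single assertion that a nonzero $B$ as in the lemma has $\operatorname{Soc}(B)\neq\{0\}$. For this I would invoke \cite{Cedo}, Lemma 2.16: a skew brace of nilpotent type is right-nilpotent exactly when its socle series reaches $B$; and since $\operatorname{Soc}(B)=\{0\}$ forces the whole socle series to vanish (inductively $\operatorname{Soc}_{n}(B)=\{0\}$ gives $\operatorname{Soc}_{n+1}(B)=\operatorname{Soc}(B)=\{0\}$), it is enough to prove that $B$ is \emph{right-nilpotent}.

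The heart of the matter is therefore the upgrade from right-nil to right-nilpotent. The identity $b\ast b=0$ makes every left-normed star product of length $\ge 2$ vanish, so $B$ is automatically right-nil; the difficulty is that right-nil is genuinely weaker than right-nilpotency, as the order-$32$ brace of Example~\ref{exB} is even strong-nil with $(B,\cdot)$ nilpotent yet not right-nilpotent. Hence one must use the full strength of $b\ast b=0$ for all $b$, not merely its iterates on a single element. I would linearize: from $(ab)\ast c=a\ast(b\ast c)+b\ast c+a\ast c$ together with $b\ast b=0$ one reads off clean relations such as $(ab)\ast b=a\ast b$, and feeding these into the additive cocycle identity $a\ast(b+c)=a\ast b+b+a\ast c-b$ should drive the right series $R_n(B)$ down. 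To make the linearization bite I expect to reduce to the case that $|B|$ is a prime power, using that $(B,+)$ and $(B,\cdot)$ are both nilpotent to split $B$ as the direct product of its primary components. The main obstacle is precisely this right-nilpotency step, and within it the prime $p=2$: for odd $p$ the invertibility of $2$ converts $b\ast b=0$ into a skew-symmetry of $\ast$ that collapses the right series after boundedly many steps, in the spirit of a Nagata--Higman argument, whereas for $p=2$ — the regime of the counterexample — no such cancellation is available, and one must instead exploit finiteness and multiplicative nilpotency directly, adapting the techniques of Smoktunowicz~\cite{Smok}, Theorem~12, from an abelian to a merely nilpotent additive group. Once right-nilpotency is secured, the assembly of the first two paragraphs finishes the proof.
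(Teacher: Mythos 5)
Your opening reductions are correct, and they do match the closing steps of the paper's own argument: the hypotheses pass to quotients, so by induction it is enough to produce one nonzero element of $\zeta(B)$; and since $\operatorname{Soc}(B)$ is an ideal, $(\operatorname{Soc}(B),\cdot)$ is normal in the nilpotent group $(B,\cdot)$, so any nonzero socle forces $\zeta(B)=\operatorname{Soc}(B)\cap Z(B,\cdot)\neq\{0\}$. But everything therefore hinges on proving $\operatorname{Soc}(B)\neq\{0\}$ (equivalently, in your formulation, right-nilpotency), and this is exactly where your proposal stops being a proof. Your third paragraph is a description of strategies, not an argument: ``should drive the right series down'', ``I expect to reduce'', ``in the spirit of a Nagata--Higman argument'', ``adapting the techniques of Smoktunowicz''. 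Moreover, the mechanisms you gesture at are doubtful. The star product is not biadditive --- only $a\ast(b+c)=a\ast b+b+a\ast c-b$ and $(ab)\ast c=a\ast(b\ast c)+b\ast c+a\ast c$ are available, and there is no right-distributive law --- so expanding $(a+b)\ast(a+b)=0$ does not yield a skew-symmetry $a\ast b=-b\ast a$ even when $2$ is invertible, and the Nagata--Higman heuristic has no footing. Likewise, Theorem 12 of \cite{Smok} concerns \emph{left}-nilpotency, which in the present lemma is already free of charge from Theorem 4.8 of \cite{Cedo} because $(B,\cdot)$ is assumed nilpotent; the genuine difficulty is on the right side, and your own (correct) observation about Example \ref{exB} shows that no amount of manipulation of nil conditions alone can close it. The isolated identity $(ab)\ast b=a\ast b$ is true but leads nowhere by itself.

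The idea you are missing is the paper's central trick, which avoids attacking right-nilpotency head on. After reducing to $|B|=p^n$ (Sylow subgroups of $(B,+)$ are left ideals, hence Sylow in $(B,\cdot)$, hence normal there by multiplicative nilpotency, hence ideals, so $B$ splits), one observes that the semidirect product $[(B,+)]_\lambda(B,\cdot)$ is a finite $p$-group in which $(B,+)$ is a nontrivial normal subgroup; hence there is a \emph{nonzero} $a\in Z(B,+)$ fixed by every $\lambda_b$, i.e.\ with $b\ast a=0$ for all $b\in B$. For such an $a$ one has $b+a=ba$, so the hypothesis applied to the single element $b+a$ unfolds through the product identity:
\[
0=(b+a)\ast(b+a)=(ba)\ast(b+a)=b\ast\big(a\ast(b+a)\big)+a\ast(b+a)+b\ast(b+a)=b\ast(a\ast b)+a\ast b,
\]
using $a\ast a=b\ast b=b\ast a=0$. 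Setting $c=a\ast b$, this reads $b\ast c=-c$, whence iterated stars give $b\ast(b\ast\cdots\ast(b\ast c))=\pm c$; since $B$ is left-nilpotent by Theorem 4.8 of \cite{Cedo}, this forces $c=a\ast b=0$ for every $b$. Thus $a\in\operatorname{Ker}(\lambda)\cap Z(B,+)=\operatorname{Soc}(B)$, which is precisely the nonzero socle element your skeleton needs; the paper then concludes by induction that $B=\operatorname{Soc}_m(B)$, hence $B$ is right-nilpotent by Lemma 2.16 of \cite{Cedo}, and combines this with left-nilpotency via Corollary 2.15 of \cite{periodici} (your $\zeta$-based ending would serve equally well). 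Without this fixed-point-plus-expansion argument, or a genuine substitute for it, your proposal has a gap at its central step.
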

\begin{proof}
Let $L_p$ be any Sylow $p$-subgroup of $(B,+)$ for some prime $p$. Since $L_p$ is a characteristic subgroup of $(B,+)$ it follows that $L_p$ is a left-ideal of~$B$. Thus,~$L_p$ is also a~Sy\-low~\hbox{$p$-sub}\-group of $(B,\cdot)$ and hence $L_p$ is an ideal of $B$. It follows that~\hbox{$B=\operatorname{Dr}_{p\in\mathbb{P}}L_p$.} Thus, in order to prove that~$B$ is centrally nilpotent, we may assume $B$ has prime power order $p^n$. Since the natural semidirect product $[(B,+)]_\lambda(B,\cdot)$ is a finite $p$-group, it follows that there is an element $a\in Z(B,+)$ such that $b\ast a=0$ for all $b\in B$. Now, $$
\begin{array}{c}
0=(b+a)\ast(b+a)=(b\cdot a)\ast(b+a)=b\ast (a\ast(b+a))+a\ast(b+a)+b\ast(b+a)\\[0.2cm]
=b\ast (a\ast b)+a\ast(b+a)+b\ast(b+a)=b\ast(a\ast b)+a\ast b
\end{array}
$$ for all $b\in B$. By The\-o\-rem~4.8 of \cite{Cedo}, $B$ is left-nilpotent. Let $c=a\ast b$. Then $b\ast c=-c$ and consequently $b\ast(-c)=c$. Therefore $$\underbrace{b\ast \big(b\ast\ldots \ast(b}_{\ell\,\textnormal{times}}\ast\, c)\big)=(-1)^\ell c$$ for every $\ell\in\mathbb{N}$, which means (by left-nilpotency) that $c=a\ast b=0$. Therefore $a\in\operatorname{Ker}(\lambda)$ and hence $a\in\operatorname{Soc}(B)$. By induction on the order of~$B$, we have that $B=\operatorname{Soc}_m(B)$ for some $m\in\mathbb{N}$. Thus, Lemma 2.16 of \cite{Cedo} shows that~$B$ is right-nilpotent. Since $B$ is both left- and right-nilpotent, it is also centrally nilpotent by Co\-rol\-la\-ry~2.15 of \cite{periodici}.
\end{proof}


\begin{cor}\label{corabe}
Let $B$ be a finite brace such that $b\ast b=0$ for all $b\in B$. Then~$B$ is centrally nilpotent.
\end{cor}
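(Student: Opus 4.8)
The plan is to deduce the statement from Lemma \ref{abelian}: since $B$ is a brace, $(B,+)$ is abelian and so $B$ is automatically of nilpotent type, hence the only missing hypothesis of Lemma \ref{abelian} is that $(B,\cdot)$ be nilpotent. I would obtain this by proving that $B$ is the internal direct product of the primary components of its additive group, each of which is a brace of prime-power order and therefore has a $p$-group as its multiplicative group. Note that a finite brace with $b\ast b=0$ need not have nilpotent multiplicative group automatically, so the hypothesis $b\ast b=0$ must be genuinely used to force this decomposition.

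Concretely, write $(B,+)=\bigoplus_{p}B_p$ for the primary decomposition. Each $B_p$ is characteristic in $(B,+)$, hence $\lambda$-invariant, hence a left ideal; in particular $(B_p,\cdot)$ is a subgroup of $(B,\cdot)$ of order $|B_p|$, i.e.\ a Sylow $p$-subgroup. The crux, and the main obstacle, is to prove that the cross star products vanish, namely $B_p\ast B_q=\{0\}$ whenever $p\neq q$; this is exactly where $b\ast b=0$ enters.

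For the cross-product vanishing, fix $x\in B_p$ and $y\in B_q$ with $p\neq q$ and set $z=x+y$. Using left-distributivity of $\ast$ over $+$ (which in a brace reduces $a\ast(b+c)$ to $a\ast b+a\ast c$ because $(B,+)$ is abelian) together with $z\ast z=0$, I would split $0=z\ast z=(z\ast x)+(z\ast y)$ into its $B_p$- and $B_q$-parts, noting $z\ast x=\lambda_z(x)-x\in B_p$ and $z\ast y\in B_q$ since $\lambda_z$ preserves the characteristic components, to conclude $\lambda_z(x)=x$ and $\lambda_z(y)=y$. Then I would exploit the identity $ab=a+\lambda_a(b)$ to rewrite $z=x+y=x\cdot\lambda_x^{-1}(y)$, whence $\lambda_z=\lambda_x\lambda_w$ with $w:=\lambda_x^{-1}(y)\in B_q$. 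Evaluating $\lambda_z(x)=x$ and using $\lambda_x(x)=x$ (which is just $x\ast x=0$) yields $\lambda_w(x)=x$, that is $w\ast x=0$. Since $\lambda_x^{-1}$ permutes $B_q$, letting $y$ range over $B_q$ makes $w$ range over all of $B_q$, so $w\ast x=0$ for every $w\in B_q$; hence $B_q\ast B_p=\{0\}$, and the symmetric argument gives $B_p\ast B_q=\{0\}$.

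Finally I would assemble the pieces. From $B_p\ast B_q=\{0\}$ for $p\neq q$ and $B_p\ast B_p\subseteq B_p$, left-distributivity gives $B_p\ast B\subseteq B_p$, so each $B_p$ is an ideal. Moreover $x\cdot y=x+\lambda_x(y)=x+y$ for $x\in B_p$, $y\in B_q$ with $p\neq q$ (since $x\ast y=0$ forces $\lambda_x(y)=y$), so multiplication and addition agree across distinct components and elements of distinct components commute multiplicatively; hence $(B,\cdot)=\operatorname{Dr}_p(B_p,\cdot)$ is an internal direct product of $p$-groups and is therefore nilpotent. As $(B,+)$ is abelian, $B$ is of nilpotent type, and Lemma \ref{abelian} now applies to give that $B$ is centrally nilpotent.
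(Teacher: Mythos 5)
Your proof is correct, and it reaches the conclusion by a genuinely different route from the paper's. Both arguments reduce the statement to Lemma~\ref{abelian}, whose only hypothesis that is not automatic for a brace is the nilpotency of $(B,\cdot)$; the difference lies in how that nilpotency is obtained. The paper gets it in two lines by citation: Theorem~12 of \cite{Smok} shows that a finite brace with $b\ast b=0$ is left-nilpotent, and Theorem~4.8 of \cite{Cedo} then shows that $(B,\cdot)$ is nilpotent. You instead prove it directly: splitting $0=(x+y)\ast(x+y)$ along the primary decomposition of $(B,+)$ (legitimate, since each $\lambda_z$ preserves the characteristic subgroups $B_p$), writing $x+y=x\cdot\lambda_x^{-1}(y)$, and using the homomorphism property of $\lambda$ together with $\lambda_x(x)=x$ to get $B_q\ast B_p=\{0\}$ for $p\neq q$; every step checks out, including the observation that $y\mapsto\lambda_x^{-1}(y)$ is a bijection of $B_q$, which is what lets you quantify over all of $B_q$. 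The trade-off is clear: the paper's proof is shorter but rests on two nontrivial external results (notably Smoktunowicz's theorem, of which the paper's Main Theorem is described as a partial extension), whereas yours is elementary and self-contained and yields strictly more structure --- the components $B_p$ are ideals with pairwise trivial star products, so $B$ is their direct product as a skew brace and the corollary reduces to the prime-power case (Corollary~\ref{corabe2}). In fact, your cross-component computation adapts with only minor changes to skew braces of nilpotent type (elements of coprime order in a finite nilpotent group commute, which is all the distributivity step needs), so it would give an alternative proof of the paper's Main Theorem as well. One incidental sentence in your write-up is misstated: a finite brace with $b\ast b=0$ \emph{does} have nilpotent multiplicative group --- that is precisely what you end up proving; presumably you meant that this is not known a priori, which is why the hypothesis must be used. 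This slip does not affect the argument.
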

\begin{proof}
It follows from Theorem 12 of \cite{Smok} that $B$ is left-nilpotent. Then The\-o\-rem~4.8 of \cite{Cedo} shows that $(B,\cdot)$ is nilpotent. Finally, Lemma \ref{abelian} completes the proof.~\end{proof}

\begin{cor}\label{corabe2}
Let $B$ be a finite skew brace of prime power order such that $b\ast b=0$. Then $B$ is centrally nilpotent.
\end{cor}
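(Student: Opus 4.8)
The plan is to deduce this immediately from Lemma~\ref{abelian}, so the only real work is to verify that its hypotheses are met. First I would observe that $(B,+)$ and $(B,\cdot)$ are two group structures on the same underlying set $B$, so they have the same cardinality: if $|B|=p^n$ for a prime $p$, then both $(B,+)$ and $(B,\cdot)$ are finite $p$-groups. In particular $(B,+)$ is nilpotent, so $B$ is a skew brace of nilpotent type, and $(B,\cdot)$ is nilpotent as well.

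Having checked this, I would simply invoke Lemma~\ref{abelian}: the assumptions ``$B$ of nilpotent type'', ``$(B,\cdot)$ nilpotent'', and ``$b\ast b=0$ for all $b\in B$'' are all satisfied, whence $B$ is centrally nilpotent. I do not anticipate any genuine obstacle, since the corollary is just the specialization of Lemma~\ref{abelian} to the case where the common order of the two groups is a prime power; the one point worth stating explicitly is that prime-power order forces \emph{both} the additive and the multiplicative group to be $p$-groups, which is exactly what upgrades the single hypothesis on the order into the two nilpotency conditions required by the lemma.
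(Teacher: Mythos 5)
Your proof is correct and matches the paper's intent: the paper states this corollary without proof precisely because it follows at once from Lemma~\ref{abelian}, and your argument supplies exactly the missing observation. Namely, since $(B,+)$ and $(B,\cdot)$ share an underlying set of order $p^n$, both are finite $p$-groups and hence nilpotent, so the hypotheses of Lemma~\ref{abelian} are satisfied.
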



\noindent{\sc Proof of the Main Theorem} --- For any prime $p$, let $L_p$ be the Sylow $p$-subgroup of~$(B,+)$. Since $(B,+)$ is nilpotent, it follows that $(B,+)$ is the direct product of its~Sy\-low~\hbox{$p$-sub}\-groups. By~Co\-rol\-la\-ry~\ref{corabe}, we may assume there is some prime $p$ for which~$(L_p,+)$ is non-abelian. Let $Z=Z(L_p,+)$. Now, $Z\times Q$ is a strong left-ideal of $B$, where~$Q$ is the Hall~\hbox{$p'$-sub}\-group of $(B,+)$. By induction, $Z\times Q$ is centrally nilpotent, so, as a skew brace, it is the direct product of its additive Sylow subgroups (which are ideals), and in particular $Z$ and $Q$ are ideals of $Z\times Q$.

Furthermore, $L_p$ (which is also a strong left-ideal of $B$) is centrally nilpotent by~Corol\-la\-ry~\ref{corabe2}, and hence $\zeta(L_p)$ is a non-zero ideal of $L_p$ contained in $Z$.  Thus, $\zeta(L_p)$ is an ideal of $Z\times Q$. Since $Z\times Q$ is centrally nilpotent, we have that $$C=\zeta(L_p)\cap\zeta(Z\times Q)\neq\{0\}.$$ Let $c\in C$. Then $c\in Z(B,+)\cap Z(B,\cdot)$. Moreover, since any element of $B$ can be written as a sum of an element of $P$ and an element of $Q$, it follows that $c\in\operatorname{Ker}(\lambda)$. Therefore $c\in\zeta(B)$. By induction $B/\langle c\rangle$ is centrally nilpotent, and so $B$ is centrally nilpotent as well.\hfill\qedbox

\medskip

\begin{ex}\label{exA}
There exists a skew brace $B$ of order $6$ such that $b\ast b=0$ for all $b\in B$, but $B$ is not right-nilpotent.
\end{ex}
\begin{proof}
Let $(B,+)\simeq\operatorname{Sym}(3)$ and consider a product in $B$ given by $ab = b+a$ for every $a,b\in B$. It turns out that $(B,+,\cdot)$ is a skew brace such that $a \ast a = 0$ for every~\hbox{$a \in B$.} Moreover, $\lambda_a(b) = -a +b +a$ for every $a,b\in B$. Thus, $B \ast B = \langle c \rangle_+$ is the~Sy\-low~\hbox{$3$-sub}\-group of $(B,+)$. Since $B^{(3)} = (B\ast B) \ast B = B \ast B$, it follows that $B$ is not right-nilpotent.
\end{proof}

\begin{ex}\label{exB}
There exists a brace $B$ of order $32$ such that $(b\ast b)\ast b=0$ and $b \ast (b\ast b) = 0$ for all $b\in B$, but $\operatorname{Soc}(B)=\{0\}$, so, in particular, $B$ is not right-nilpotent.
\end{ex}
\begin{proof}
Let $(B,+) = \langle a \rangle \times \langle b \rangle \times \langle c \rangle \times \langle d \rangle \times \langle e\rangle$ and
\begin{align*}
(C, \cdot)  = \Big\langle m_1,m_2,m_3,m_4,m_5 \Big|
\begin{tabular}{l}
  $m_i^2 = 1,\ 1\leq i \leq 5,\ (m_5m_2)^2 = (m_5m_3)^2 = 1$,\\
  $m_5m_1m_5 = m_1m_3,\, m_5m_4m_5 = m_2m_4$
\end{tabular}\Big\rangle
\end{align*}
be groups of order $32$ respectively isomorphic to $C_2 \times C_2 \times C_2\times C_2 \times C_2$ and to a semidirect product of the form $[C_2\times C_2 \times C_2\times C_2]C_2$. We have that $C$ acts on $B$ by means of the action $\lambda \colon C \rightarrow \operatorname{Aut}(B,+)$ defined by 
\begin{align*}
\lambda_{m_1}(a) & = a, & \lambda_{m_2}(a) &= c+d+e, & \lambda_{m_3}(a) & = c+d+e, \\
\lambda_{m_1}(b) & = b, & \lambda_{m_2}(b) & = b, & \lambda_{m_3}(b) & = a+b+c+d+e,\\
\lambda_{m_1}(c) & = c, & \lambda_{m_2}(c) & = c, & \lambda_{m_3}(c) & = a+d+e, \\
\lambda_{m_1}(d) & = a+c+e, & \lambda_{m_2}(d) & = a+c+e, & \lambda_{m_3}(d) & = d,\\
\lambda_{m_1}(e) & = a+c+d, & \lambda_{m_2}(e) & = e , &  \lambda_{m_3}(e) & = e,
\end{align*}
\begin{align*}
\lambda_{m_4}(a) & = a, & \lambda_{m_5}(a) & = a+b+c+e,\\
\lambda_{m_4}(b) & = b, & \lambda_{m_5}(b) & = a+b+c+d, \\
\lambda_{m_4}(c) & = a+d+e, & \lambda_{m_5}(c) & = a+d, \\
\lambda_{m_4}(d) & = a+c+e, & \lambda_{m_5}(d) & = a+b+e,\\
\lambda_{m_4}(e) & = e, & \lambda_{m_5}(e) & = e.
\end{align*}

Consider the semidirect product $G = [B]C$ associated with this action (here, $B$ is written multiplicatively for the sake of notation). Then $G$ is trifactorised, as there exists $D = \langle abm_1, em_2, abdem_3, abcdem_4, bcm_5 \rangle \leq G$ such that $G = DC = BD$ and \hbox{$C\cap D = B\cap D = \{1\}$.} By \cite[Lem\-ma~3.2]{BallesterEsteban22}, there exists a bijective $1$-cocycle $\delta\colon C \rightarrow (B,+)$, with respect to $\lambda$, given by \hbox{$D = \{\delta(c)c: c \in C\}$} (see~Table~\ref{tab:3rnilp-nornilp}). This yields a product in $B$, provided by $bc = \delta(\delta^{-1}(b)\delta^{-1}(c))$ (see \cite{BallesterEsteban22} for further details), and we obtain a brace $(B,+,\cdot)$ corresponding to \texttt{SmallBrace(32, 24952)} in the \textsf{Yang--Baxter} library~\cite{VendraminKonovalov22-YangBaxter-0.10.2} for~\textsf{GAP}~\cite{GAP4-12-2}.
\begin{table}[h]
  \[{\small
    \begin{array}{llll}
      x&\delta(x)&x&\delta(x) \\\hline
    1&0            &  m_1m_2m_3 & a+c+e   \\
    m_1 & a+b        &  m_1m_2m_4 & a+e    \\
    m_2 & e   &  m_1m_2m_5 & d     \\
    m_3 & a+b+d+e &  m_1m_3m_4 & b+d+e  \\
    m_4 & a+b+c+d+e &  m_1m_3m_5 & b+c+d+e  \\
    m_5 & b+c    &  m_1m_4m_5 & a+b+c  \\
    m_1m_2 & b+c+d   &  m_2m_3m_4 & a+d  \\
    m_1m_3 & d+e  &  m_2m_3m_5 & a+c+d    \\
    m_1m_4 & c+d+e      & m_2m_4m_5   & c+e \\
    m_1m_5 & a+c  &  m_3m_4m_5   & b \\
    m_2m_3 & a+b+d  & m_1m_2m_3m_4& a+b+c+e \\ 
    m_2m_4 & a+b+c+d&  m_1m_2m_3m_5& a+d+e \\
    m_2m_5 & b+c+e &  m_1m_2m_4m_5& b+d \\
    m_3m_4 & a+d+e  & m_1m_3m_4m_5& a \\
    m_3m_5 & a+c+d+e  & m_2m_3m_4m_5& b+e \\
    m_4m_5 & c & m_1m_2m_3m_4m_5& c+d\\\hline
  \end{array}}
  \]
  \caption{Associated bijective $1$-cocycle}
  \label{tab:3rnilp-nornilp}
\end{table}

Note that $\delta(x)\ast \delta(x) = 0$ for every $x\in C$ of order $2$, while routine calculations show that for the rest of non-trivial elements $y\in B$, one has $y\ast y \neq 0$ but $(y\ast y)\ast y = y \ast (y\ast y) =  0$.

Now, every element of $C$ can be written as $m_1^{\epsilon_1}m_2^{\epsilon_2}m_3^{\epsilon_3}m_4^{\epsilon_4}m_5^{\epsilon_5}$, with $\epsilon_i \in \{0,1\}$, for every $1\leq i \leq 5$. If $1\neq m_1^{\epsilon_1}m_2^{\epsilon_2}m_3^{\epsilon_3}m_4^{\epsilon_4}m_5^{\epsilon_5} \in \operatorname{Ker} \lambda$, then 
\[ \lambda_{m_1^{\epsilon_1}m_2^{\epsilon_2}m_3^{\epsilon_3}m_4^{\epsilon_4}m_5^{\epsilon_5}}(e) = e\]
and so $\epsilon_1 = 0$. Moreover, $\lambda_{m_2}$ fixes $b$ and $c$, $\lambda_{m_3}$ fixes $d$ and $\lambda_{m_4}$ fixes $a$ and $b$. Thus, $\operatorname{Ker}\lambda \cap \langle m_2,m_3,m_4\rangle =\{1\}$. Since $m_2m_5 = m_5m_2$, $m_3m_5 = m_5m_3$ and $m_4m_5 = m_5m_2m_4$ it is easy to check that also $\langle m_2,m_3,m_4,m_5 \rangle \cap \operatorname{Ker}(\lambda) = \{1\}$. Hence, \hbox{$\operatorname{Ker} \lambda =\{1\}$} and $\operatorname{Soc}(B) =\{0\}$. Since $(B,+)$ is abelian, so $B$ is not right-nilpotent.~\end{proof}

\section*{Acknowledgements}

We thank the referee for their careful reading of the manuscript and for pointing out a mistake in an earlier version of the proof of Lemma \ref{abelian}.

\begin{flushleft}
\rule{8cm}{0.4pt}\\
\end{flushleft}

{
\sloppy
\noindent
Maria Ferrara

\noindent
Dipartimento di Matematica e Fisica

\noindent
Università degli Studi della Campania  ``Luigi Vanvitelli''

\noindent
viale Lincoln 5, Caserta (Italy)

\noindent
e-mail: maria.ferrara1@unicampania.it
}

\bigskip
\bigskip

{
\sloppy
\noindent
Adolfo Ballester-Bolinches, Vicent P\'erez-Calabuig

\noindent
Departament de Matem\`atiques

\noindent
Universitat de Val\`encia, Dr.\ Moliner, 50, 46100 Burjassot, Val\`encia (Spain)

\noindent
Adolfo.Ballester@uv.es;\;\; Vicent.Perez-Calabuig@uv.es

}

\bigskip
\bigskip

{
\sloppy
\noindent
Marco Trombetti

\noindent 
Dipartimento di Matematica e Applicazioni ``Renato Caccioppoli''

\noindent
Università degli Studi di Napoli Federico II

\noindent
Complesso Universitario Monte S. Angelo

\noindent
Via Cintia, Napoli (Italy)

\noindent
e-mail: marco.trombetti@unina.it 

}

\end{document}